\documentclass{amsart}
\usepackage{amsfonts,amsmath,amsthm,amssymb}
\usepackage[utf8]{inputenc}
\usepackage{mathtools}
\usepackage{booktabs}
\usepackage{array}
\usepackage{paralist}
\usepackage{hyperref}

\usepackage{frenchineq}

\theoremstyle{remark}

\theoremstyle{definition}

\theoremstyle{plain}
\newtheorem{lemma}{Lemma}

\theoremstyle{plain}
\newtheorem{theorem}{Theorem}
\newtheorem{proposition}[theorem]{Proposition}

\newcommand{\Pcal}{\mathcal{P}}
\newcommand{\Ppriority}{\mathcal{P}_\textsf{priority}}
\newcommand{\Pconflict}{\mathcal{P}_\textsf{conflict}}
\newcommand{\Qcal}{\mathcal{Q}}
\newcommand{\Qsync}{\mathcal{Q}_\textsf{sync}}
\newcommand{\R}{\mathbb{R}}
\newcommand{\Rplus}{\mathbb{R}_{\geq 0}}
\newcommand{\Rsplus}{\mathbb{R}_{> 0}}
\newcommand{\N}{\mathbb{N}}

\newcommand{\G}{\mathbb{G}}
\newcommand{\out}{^{\textrm{out}}}
\newcommand{\outp}{^{\textrm{out}}_+}
\newcommand{\outm}{^{\textrm{out}}_-}
\newcommand{\inc}{^{\textrm{in}}}

\newcommand{\taucyc}{\bar \tau}

\newcommand{\ie}{i.e.}
\newcommand{\pigen}[1]{\pi_{\textrm{#1}}}
\newcommand{\taugen}[1]{\tau_{\textrm{#1}}}
\newcommand{\pie}{\pigen{ext}}
\newcommand{\piu}{\pigen{ur}}
\newcommand{\pia}{\pigen{adv}}
\newcommand{\taue}{\taugen{ext}}
\newcommand{\tauu}{\taugen{ur}}
\newcommand{\taua}{\taugen{adv}}
\newcommand{\taut}{\taugen{tr}}
\newcommand{\trans}[1][]{\stackrel{#1}{\longrightarrow}}
\newcommand{\transshort}[1][]{\rightarrow^{#1}}
\newcommand{\cadlag}{{c\`adl\`ag}}
\newcommand{\power}[2]{#2#1}
\newcommand{\eps}{\varepsilon}

\begin{document}

\title[Performance Evaluation of an Emergency Call Center]{Performance Evaluation of an Emergency Call Center:
Tropical Polynomial Systems applied to Timed Petri Nets
}
\author[X.Allamigeon]{Xavier Allamigeon} 
\author[V.B\oe{}uf]{Vianney B\oe{}uf}
\author[S.Gaubert]{St\'ephane Gaubert}
\address[X.~Allamigeon, S.~Gaubert]{INRIA and CMAP, \'Ecole polytechnique, CNRS}
\address[V.~B\oe{}uf]{\'Ecole des Ponts ParisTech, INRIA and CMAP, \'Ecole polytechnique, CNRS, Brigade de sapeurs-pompiers de Paris}
\email{FirstName.LastName@inria.fr}
\thanks{The three authors were partially supported by the programme ``Concepts, Systèmes et Outils pour la Sécurité Globale'' of the French National Agency of Research (ANR), project ``DEMOCRITE'', number ANR-13-SECU-0007-01. The first and last authors were partially supported by the programme ``Ingénierie Numérique \& Sécurité'' of ANR, project ``MALTHY'', number ANR-13-INSE-0003.}
\thanks{
A conference proceeding version of this article can be found in~\cite{ABG2015}.
}

\begin{abstract}
We analyze a timed Petri net model of an emergency call center which processes calls with different levels of priority. The counter variables of the Petri net represent the cumulated number of events as a function of time. We show that these variables are determined by a piecewise linear dynamical system. We also prove that computing the stationary regimes of the associated fluid dynamics reduces to solving a polynomial system over a tropical (min-plus) semifield of germs. This leads to explicit formul\ae{} expressing the throughput of the fluid system as a piecewise linear function of the resources, revealing the existence of different congestion phases. Numerical experiments show that the analysis of the fluid dynamics yields a good approximation of the real throughput.
\end{abstract}

\maketitle

\section{Introduction}

\subsection*{Motivations.}

Emergency call centers must handle complex and diverse help requests, 
involving different instruction procedures leading to the
engagement of emergency means. An important issue is the performance evaluation
of these centers. One needs in particular to estimate the dependence of
quantities like throughputs or waiting times
with respect to the allocation of resources, like the operators answering
calls.

The present work originates from a case study relative
to the current project led by Pr\'efecture de Police de Paris (PP),
involving the Brigade de sapeurs-pompiers de Paris (BSPP),
of a new organization to handle emergency calls to Police (number 17),
Firemen (number 18), and untyped emergency calls
(number 112), in the Paris area. 
In addition to the studies and experimentation already carried
out by PP and BSPP experts,
we aim at developing formal methods, based on mathematical models.
One would like to derive analytical formul\ae\,
or performance bounds allowing one to confirm the results
of simulation, to identify exceptional
situations not easily accessible to simulations,
and to obtain a general understanding of potential
bottlenecks. 
In such applications, complex concurrency phenomena (available operators
must share their time between different types of requests)
are arbitrated by priority rules.
The systems under study are beyond the known exactly solvable
classes of Markov models, and it is desirable to develop new analytical
results. 

\subsection*{Contributions.}

We present an algebraic approach which allows to analyze the performance of systems involving priorities and modeled by timed Petri nets. 
Our results apply to the class of Petri nets in which the places
can be partitioned in two categories:
the routing in certain places is subject to priority rules,
whereas the routing at the other places is free choice.

Counter variables determine the number of firings of the different
transitions as a function of time.
Our first result shows that, for the earliest firing rule,
the counter variables are the solutions of a
piecewise linear dynamical system (Section~\ref{sec:piecewise}). Then, we introduce a fluid
approximation in which the counter variables are real
valued, instead of integer valued.
Our main result shows that in the fluid model, the stationary regimes
are precisely the solutions of a set of lexicographic piecewise
linear equations, which constitutes a polynomial system over a tropical (min-plus) semifield of germs (Section~\ref{sec:stationary}). The latter is a modification of the ordinary tropical semifield.
In essence, our main result shows that computing stationary regimes reduces to solving tropical polynomial systems.

Solving tropical polynomial systems is one of the most basic
problems of tropical geometry. The latter provides insights on the nature of solutions, as well as algorithmic tools. In particular, the tropical
approach allows one to determine the different congestion phases of the system. 

We apply this approach to the case study of PP and BSPP.
We introduce a simplified model of emergency call center (Section~\ref{sec:model}). This allows us to concentrate on the analysis of an essential feature of the
organization: the two level emergency procedure. Operators at level $1$
initially receive the calls, qualify their urgency, handle the non urgent
ones, and transfer the urgent cases to specialized
level $2$ operators who complete the instruction. 
We solve the associated system of tropical polynomial equations
and arrive at an explicit computation of the different
congestion phases, 
depending on the ratio $N_2/N_1$
of the numbers of operators of level $2$ and $1$ (Section~\ref{sec:application}). Our analytical
results are obtained only for the approximate fluid model. However,
they are confirmed by simulations in which the original semantics of the Petri nets
(with integer firings) is respected (Section~\ref{sec:experiments}). 

\subsection*{Related work.} 
Our approach finds its origin in the maxplus modeling of timed discrete event systems, introduced by Cohen, Quadrat and Viot and further developed by Baccelli and Olsder, see~\cite{bcoq,how06} for background.
The idea of using counter variables already appeared in their work.
However, the classical results only apply to restricted classes
of Petri nets, like event graphs, or event graphs with weights
as, for instance, in recent 
work by Cottenceau, Hardouin and Boimond~\cite{cottenceau}. 
The modeling of more general Petri nets by a combination of min-plus
linear constraints and classical linear constraints was
proposed by Cohen, Gaubert and Quadrat~\cite{CGQ95b,CGQ95a}
and Libeaut and Loiseau (see~\cite{Libeaut}).
The question of analyzing the behavior of the dynamical systems
arising in this way was stated in a compendium of open problems in control theory~\cite{maxplusblondel}. 
A key discrepancy with the previously developed min-plus algebraic models
lies in the {\em semantics} of the Petri nets. The model
of~\cite{CGQ95b,CGQ95a} requires the routing to be based
on open loop {\em preselection} policies of tokens at places, 
and it does not allow for priority rules. 
This is remedied in the present
work: we show that priority rules can be written in 
a piecewise linear way, leading to a rational tropical dynamics.

Our approach is inspired by a work of Farhi, Goursat and Quadrat~\cite{farhi}, who developed a min-plus model for a road traffic network. The idea
of modeling priorities by rational min-plus dynamics 
first appeared there. By comparison, one aspect of novelty of the 
present approach consists in showing
that this idea applies to a large class of Petri nets, mixing
free choice and priority routing, so that its scope
is not limited to a special class of road traffic models. Moreover, we provide a complete proof that these Petri nets follow the rational tropical dynamics, based on a precise analysis of the counter variables along an execution trace. 
Finally,
the approach of~\cite{farhi} was developed in the discrete time
case. A novelty of the present work consists in the treatment of
the continuous time. This
requires the introduction of a symbolic perturbation
technique, working with semifield of germs. This technique
was used in~\cite{gg0} for algorithmic purposes.
It has been recently applied by Allamigeon,
Fahrenberg, Gaubert, Katz, and Legay to the analysis of timed systems~\cite{uli2013}. 

The present piecewise affine dynamical systems bear some general resemblance
with min-plus models of cellular automata, and in particular with the 
ultradiscrete Toda equation studied by Inoue and Iwao~\cite{inoue}. We believe these aspects are worth being further studied.

The analysis of timed Petri nets is a major question, which has been extensively studied. We refer to~\cite{BerthomieuDiaz91,AbdullaNylen01,Gardey04,Jacobsen11} for a non-exhaustive account on the topic, and to~\cite{tina,romeo,tapaal} for examples of tools implementing these techniques. An important effort has been devoted to the comparison of timed Petri nets with timed automata in terms of expressivity, see for instance~\cite{Berard05,Srba08}. The approaches developed in the aforementioned works aim at checking whether a given specification is satisfied (for instance, reachability, or more generally, a property expressed in a certain temporal logic), or at determining whether two Petri nets are equivalent in the sense of bisimulation. Hence, the emphasis is on issues different from the present ones: we focus on the performance analysis of timed Petri nets, by determining the asymptotic throughputs of transitions.

\subsubsection*{Acknowledgments.} We thank R\'egis Reboul, from PP, in charge of the emergency call centers project, and Commandant St\'ephane Raclot, from BSPP, for the information and insights they provided throughout the present work. We are grateful to the anonymous reviewers for their comments which helped to improve the presentation of this paper.  

\section{A simplified Petri net model of an emergency call center}\label{sec:model}

\begin{figure}[t] 
\begin{center}
\includegraphics{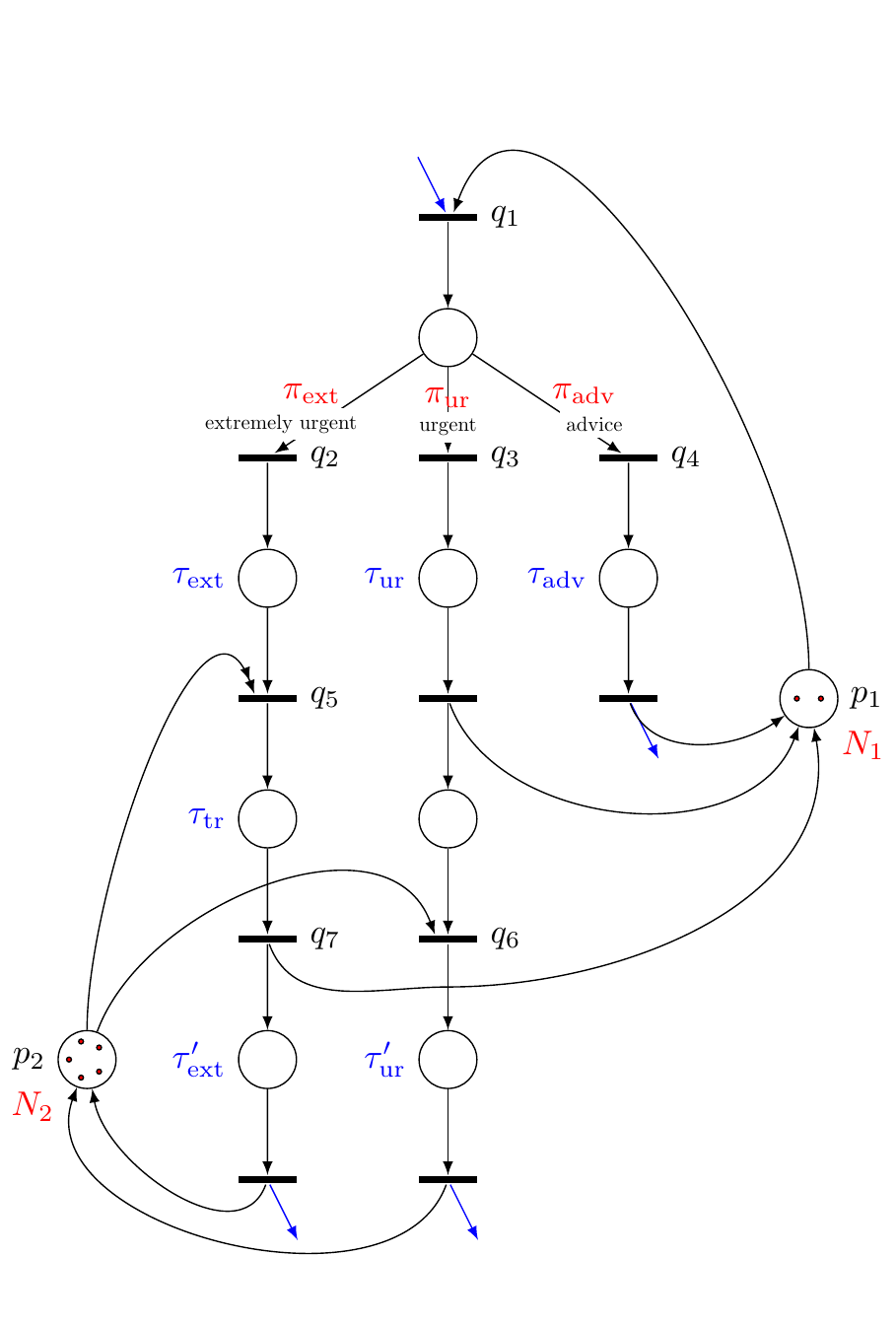}
\end{center}
\caption{Simplified Petri net model of the Parisian 17-18-112 emergency call center (organization in project). Blue arrows do not belong to the Petri net and symbolize the entrance and exit of calls in the system.}\label{fig:Petri_Net}
\end{figure}

In this section, we describe a call center
answering to emergency calls according to a two level
instruction procedure.
In the new organization planned by PP together with BSPP~\cite{reboulraclot}, 
the emergency calls to the
police (number 17), to the firemen (18), and untyped emergency calls
(European number 112)  will be dealt with according to a unified
procedure, allowing a strong coordination. Another important
feature of this organization is that it involves a two level treatment.
In the present paper, we limit our attention to the analysis of the two level procedure.
We defer to a further work the analysis of the unification of the treatment of calls with heterogeneous characteristics.
Hence, we discuss a simplified model, for academic purposes.

The first level operators filter the calls and assign them to
three categories: extremely urgent (potentially life threatening situation),
urgent (needing further instruction), and non urgent (e.g., 
call for advice). Non-urgent calls are dealt with entirely by level~1 operators.
Extremely urgent and urgent calls are passed to level~2 operators.
An advantage of this procedure lies in robustness considerations. 
In case of events generating bulk calls, the access to level~2 
experts is protected by the filtering of level~1. This allows
for better guarantees of service for the extremely urgent calls.
Every call qualified as extremely urgent
generates a 3-way conversation: the level~1 operator
stays in line with the calling person when the call is passed to the
level~2 operator.
Such 3-way conversations allow one to avoid any loss of information,
and were shown to contribute
to the quality of the procedure~\cite{reboulraclot}.
Proper dimensioning of resources is needed to make sure that the
synchronizations between level~1 and level~2 operators created
by these 3-way conversations do not create bottlenecks.
We focus on the case where the system is saturated, that is, there is an infinite queue of calls that have to be handled. We want to evaluate the performance of the system, \ie~the throughput of treatment of calls by the operators.

The call center is modeled by the timed Petri net
of Figure~\ref{fig:Petri_Net}. We describe here the net
in informal terms, referring the reader to Section~\ref{sec:piecewise}
for more information on Petri nets and the semantics that we adopt.
We use the convention that all transitions can be fired instantaneously. Holding times are attached to places. 

Let us give the interpretation in terms of places and transitions. 
The number of operators of level $1$ and $2$ is 
equal to $N_1$ and $N_2$, respectively. The marking in
places $p_1$ and $p_2$, respectively, represents the number
of idle operators of level $1$ or $2$ at a given time.
In particular, the number of tokens initially available
in places $p_1$ and $p_2$ is $N_1$ and $N_2$.
The initial marking of other places is zero. 
A firing of transition $q_1$ represents the beginning
of a treatment of an incoming emergency call by a level~1 operator.
The arc from place $p_1$ to transition $q_1$ indicates that every call requires
one level $1$ operator. The routing from transition $q_1$ to transitions
$q_2,q_3,q_4$  represents the qualification of a call as extremely urgent, urgent, or non urgent (advice). The proportions of these calls are denoted by $\pie$, $\piu$, and $\pia$, respectively, so that $\pie + \piu + \pia = 1$. The proportions
are known from historical data. The instruction of the call
at level $1$ is assumed to take a deterministic time $\taue$, $\tauu$, or $\taua$, respectively, depending on the type of call. 

After the treatment of a non urgent or urgent call at level~1, the level~1 operator is made immediately available to handle a new call. This is represented by the arcs leading
to place $p_1$ from the transitions located below the places with
holding times $\tauu$ and $\taua$. Before an idle operator of level~2 is assigned to the treatment of an urgent call, which is represented by the firing of transition $q_6$, the call is stocked in the place located above $q_6$. In contrast, the sequel of the processing
of an extremely urgent call (transition $q_5$) requires the availability of a level $2$ operator (incoming arc $p_2 \to q_5$) in order to initiate a $3$-way conversation. The level $1$ operator
is released only after a time $\taut$ corresponding to the duration of this conversation. This is represented by the arc $q_7\to p_1$. The double arrow depicted on the arc $p_2 \to q_5$ means that level~2 operators are assigned to the treatment of extremely urgent calls (if any) in priority. 
The holding times $\taue'$ and $\tauu'$ represent the time needed by a level $2$ operator to complete the instruction of extremely urgent and urgent calls respectively. 

\section{Piecewise linear dynamics of timed Petri nets with free choice and priority routing}\label{sec:piecewise}

\subsection{Timed Petri nets: notation and semantics}

A \emph{timed Petri net} consists of a set $\Pcal$ of places and a set $\Qcal$ of transitions, in which each place $p \in \Pcal$ is equipped with a holding time $\tau_p \in \Rsplus$ as well as an initial marking $M_p \in \N$. Given a place $p \in \Pcal$, we respectively denote by $p\inc$ and $p\out$ the sets of input and output transitions. Similarly, for all $q \in \Qcal$, the sets of upstream and downstream places are denoted by $q\inc$ and $q\out$ respectively.

The semantics of the timed Petri net which we use in this paper is based on the fact that every token entering a place $p \in \Pcal$  must stay at least $\tau_p$ time units in place $p$ before becoming available for a firing of a downstream transition. 
More formally, a state of the semantics of the Petri net specifies, for each place $p \in \Pcal$, the set of tokens located at place $p$, together with the age of these tokens since they have entered place $p$. 
In a given state $\sigma$, the Petri net can evolve into a new state $\sigma'$ in two different ways: 
\begin{asparaenum}[(i)]
\item either a transition $q \in \Qcal$ is fired, which we denote $\sigma \trans[q] \sigma'$. This occurs when every upstream place $p$ contains a token whose age is greater than or equal to $\tau_p$. The transition is supposed to be instantaneous. A token enters in each downstream place, and its age is set to $0$;
\item or all the tokens remain at their original places, and their ages are incremented by the same amount of time $d \in \Rplus$. This is denoted $\sigma \trans[d] \sigma'$. 
\end{asparaenum}

In the initial state $\sigma^0$, all the tokens of the initial marking are supposed to have an ``infinite'' age, so that they are available for firings of downstream transitions from the beginning of the execution of the Petri net. 
The set of relations of the form $\trans[q]$ and $\trans[d]$ constitutes a timed transition system which, together with the initial state $\sigma^0$, fully describe the semantics of the Petri net. 
Note that in this semantics, transitions can be fired simultaneously. In particular, a given transition can be fired several times at the same moment. Recall that every holding time $\tau_p$ is positive, so that we cannot have any Zeno behavior.

In this setting, we can write any execution trace of the Petri net as a sequence of transitions of the form:
\begin{equation}
\sigma^0 \trans[d^0] \; \trans[q^0_1] \; \trans[q^0_2] \dots  \trans[q^0_{n^0}] \sigma^1 \trans[d^1] \; \trans[q^1_1] \; \trans[q^1_2] \dots\trans[q^1_{n^1}] \sigma^2 \trans[d^2] \dots \label{eq:trace}  
\end{equation}
where $d^0 \geq 0$ and $d^1, d^2, \dots > 0$. In other words, we consider traces in which we remove all the time-elapsing transitions of duration $0$, except the first one, and in which time-elapsing transitions are separated by groups of firing transitions occurring simultaneously. We say that a transition $q$ is \emph{fired at the instant $t$} if there is a transition $\trans[q]$ in the trace such that the sum of the durations of the transitions of the form $\trans[d]$ which occur before in the trace is equal to $t$. The \emph{state of the Petri net at the instant $t$} refers to the state of the Petri net appearing in the trace~\eqref{eq:trace} after all transitions have been fired at the instant $t$.

In the rest of the paper, we stick to a stronger variant of the semantics, referred to as \emph{earliest behavior} semantics, in which every transition $q$ is fired at the earliest moment possible. More formally, this means that in any state $\sigma$ arising during the execution, a place $p$ is allowed to contain a token of age (strictly) greater than $\tau_p$ only if no downstream transition can be fired (\ie~no transition $\trans[q]$ with $q \in p\out$ can be applied to $\sigma$). The motivation to study the earliest behavior semantics originates from our interest for emergency call centers, in which all calls are supposed to be handled as soon as possible.

\subsection{Timed Petri nets with free choice and priority routing}

In this paper, we consider timed Petri nets in which places are free choice, or subject to priorities. 
This class of nets includes our model of emergency call center. Recall that a place $p \in \Pcal$ is said to be \emph{free choice} if either $|p\out| = 1$, or all the downstream transitions $q \in p\out$ satisfy $q\inc = \{p\}$. 
The main property of such a place is the following: if one of the downstream transitions is activated (\ie~it can be potentially fired), then the other downstream transitions are also activated. 
A place is \emph{subject to priority} if the available tokens in this place are routed to downstream transitions according to a certain priority rule. We denote by $\Ppriority$ the set of such places.
We assume that no transition has more than one upstream place subject to priority, that is, for any transition $q$, the set $q\inc \cap \Ppriority$ has at most one element.
This allows to avoid inconsistency between priority rules (e.g.~two priority places acting on the same transitions in a contradictory way).
For the sake of simplicity, we also assume in the following that every $p \in \Ppriority$ has precisely two downstream transitions, which we respectively denote by $p\outp$ and $p\outm$. Then, if both transitions are activated, the tokens available in place $p$ are assigned to $p\outp$ as a priority. Equivalently, in the execution trace of the Petri net, we have $\sigma \transshort[p\outm] \sigma'$ only if the transition $\transshort[p\outp]$ cannot be applied to the state $\sigma$. We remark that it is possible to handle multiple priority levels, up to making the presentation of the subsequent results more complicated.

\begin{figure}[t]
\begin{center}
\includegraphics{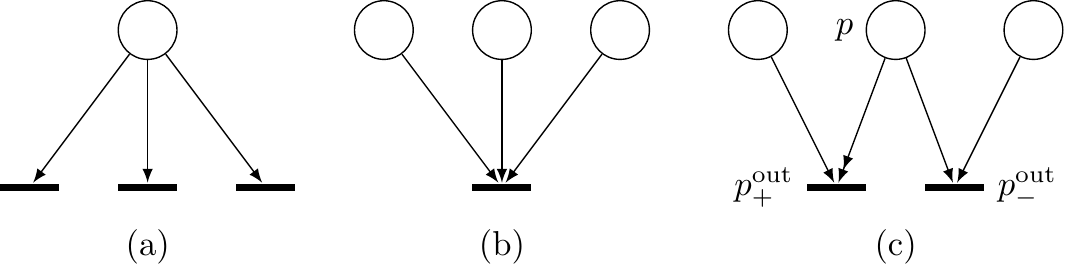}
\end{center}
\caption{Conflict, synchronization and priority configurations.}\label{fig:configuration}	
\end{figure}

To summarize, there are three possible place/transition patterns which can occur in the timed Petri nets that we consider, see Figure~\ref{fig:configuration}. 
The first two ones involve only free choice places, and are referred to as \emph{conflict} and \emph{synchronization} patterns respectively. 
We denote by $\Pconflict$ the set of free choice places that have at least two output transitions, and by $\Qsync$ the set of transitions such that every upstream place $p$ satisfies $|p\out| = 1$. By definition, we have $\Pconflict \cap (\Qsync)\inc = \emptyset$. 
The third configuration in Figure~\ref{fig:configuration} depicts a place $p$ subject to priority. 
In order to distinguish $p\outp$ and $p\outm$, we depict the arc leading to the transition $p\outp$ by a double arrow.
By assumption, the places $r \neq p$ located upstream $p\outp$ and $p\outm$ are non-priority, so that they are free-choice and have only one output transition, as depicted in Figure~\ref{fig:configuration}(c). 

\subsection{Piecewise linear representation by counter variables}\label{sub:counterDynamics}

Since we are interested in estimating the throughput of transitions in a Petri net, we associate with any transition $q \in \Qcal$ a counter variable $z_q$ from $\R$ to $\N$ such that $z_q(t)$ represents the number of firings of transition $q$ that occurred up to time $t$ included. 
Similarly, given a place $p \in \Pcal$, we denote $x_p(t)$ the number of tokens that have entered  place $p$ up to time $t$ included. Note that the tokens initially present in place $p$ are counted. More formally, $x_p(t)$ is given by the sum of the initial marking $M_p$ and of the numbers of firings of transitions $q \in p\inc$ which occurred before the instant $t$ (included). 
We extend the counter variables $x_p$ and $z_q$ to $\R_{< 0}$ by setting:
\begin{equation}
x_p(t) = M_p \, , \quad 
z_q(t) = 0 \, , \quad
\text{for all} \; t < 0
\enspace .
\label{eq:init}
\end{equation}

By construction, the functions $x_p$ and $z_q$ are non-decreasing. Besides, since they count tokens up to time $t$ \emph{included}, they are \emph{\cadlag} functions, which means that they are right continuous and have left limits at any point. 
Given a {\cadlag} function $f$, we denote by $f(t^-)$ the left limit at the point $t$.

The goal of this section is to describe the dynamics of timed Petri nets with free choice and priority routing by means of a set of piecewise linear equality constraints over the counter variables. We provide an informal presentation of these constraints. First observe that we necessarily have: 
\begin{equation}
\forall p \in \Pcal \,, \quad x_p(t) =  M_p  + \sum_{q\in p\inc} z_q(t) \, , \label{eq:pnpriority1}
\end{equation}
as the initial marking $M_p$ is counted in $x_p(t)$, and any token entering place $p$ 
before the instant $t$ must have been fired from an upstream transition $q \in p\inc$ before. 
In a similar way, if $p \in \Pconflict$, the total number of times the downstream transitions have been fired before the instant $t$ is necessarily equal to the number of tokens which entered place $p$ before time $t - \tau_p$ (included). This is due to the fact that if a token enters $p$ at the instant $s$, then it is consumed \emph{exactly} at the instant $s + \tau_p$ (by definition of the earliest behavior semantics). This yields the identity:
\begin{equation}
\forall p \in \Pconflict \,, \quad \sum_{q \in p\out} z_q(t) = x_p(t - \tau_p) \, .\label{eq:pnpriority2}
\end{equation}
Now consider a transition $q \in \Qsync$. The number of times this transition is fired at the instant $t$ is given by $z_q(t) - z_q(t^-)$. In each upstream place $p \in q\inc$, the number of tokens which are available for firing $q$ is equal to $x_p(t - \tau_p) - z_q(t^-)$. Indeed, since  place $p$ does not
have any other output transition, the total number of tokens which have left place $q$ until the instant $t$ equals $z_q(t^-)$. 
By definition of the earliest behavior semantics, the number of firings of $q$ at the instant $t$ must be exactly equal to the minimum number of tokens available in places $p \in q\inc$. If we denote $\min(x,y)$ by $x \wedge y$, we consequently get:
\begin{equation}
\forall q \in \Qsync \,, \quad z_q(t) = \bigwedge_{p \in q\inc} x_p(t - \tau_p) \, .\label{eq:pnpriority3}
\end{equation}
Finally, let us take a place $p \in \Ppriority$. Since the transition $p\outp$ has priority over $p\outm$, the quantity $z_{p\outp}(t) - z_{p\outp}(t^-)$ must be equal to the minimal number of tokens available in the upstream places, including $p$. For every place $r \in (p\outp)\inc$ distinct from $p$, the number of available tokens is given by $x_r(t - \tau_r) - z_{p\outp}(t^-)$ (recall that $p\outp$ is the only downstream transition of $r$). In contrast, the number of tokens available for firing in place $p$ is equal to $x_p(t - \tau_p) - (z_{p\outp}(t^-) + z_{p\outm}(t^-))$. We deduce that we have:
\begin{equation}
\forall p \in \Ppriority \,, \quad z_{p\outp}(t) = \bigl(x_p(t-\tau_p) - z_{p\outm}(t^-) \bigr) \wedge \bigwedge_{\substack{r \in (p\outp)\inc\\ r \neq p}} x_r(t - \tau_r) \, .\label{eq:pnpriority4}
\end{equation}
The number of tokens from place $p$ which are available for the transition $p\outm$ after the firings of $p\outp$ is given by $x_p(t - \tau_p) - (z_{p\outp}(t^-) + z_{p\outm}(t^-)) - (z_{p\outp}(t) - z_{p\outp}(t^-))$. Hence, we obtain:
\begin{equation}
\forall p \in \Ppriority \,, \quad z_{p\outm}(t) = \bigl(x_p(t-\tau_p) - z_{p\outp}(t) \bigr) \wedge \bigwedge_{\substack{r \in (p\outm)\inc\\ r \neq p}} x_r(t - \tau_r) \, .\label{eq:pnpriority5}
\end{equation}
We summarize the previous discussion by the following result:
\begin{theorem}\label{th:semantics}
Given any execution trace of a timed Petri net with free choice and priority routing, the counter variables $x_p$ ($p \in \Pcal$) and $z_q$ ($q \in \Qcal$) satisfy the constraints~\eqref{eq:pnpriority1}--\eqref{eq:pnpriority5} for all $t \geq 0$, together with the initial conditions~\eqref{eq:init}.
\end{theorem}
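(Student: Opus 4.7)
The approach is to fix an arbitrary execution trace as in~\eqref{eq:trace} and an instant $t \geq 0$, then verify each of the five equations separately by direct inspection of the earliest behavior semantics. The proof is essentially a systematic formalization of the informal discussion preceding the theorem, so the main work is to carry out the bookkeeping rigorously and to justify that the identities hold not only at firing instants but everywhere on $\Rplus$.

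Equation~\eqref{eq:pnpriority1} is a pure bookkeeping identity that follows from the definitions of $x_p$ and $z_q$; it does not involve the holding time $\tau_p$ because $x_p$ counts token \emph{entries}, each of which is either an initial token or the outcome of a firing of an upstream transition. The proof of~\eqref{eq:pnpriority2} and~\eqref{eq:pnpriority3} rests on the token-aging principle: a token deposited in $p$ at instant $s$ becomes available for consumption exactly at instant $s + \tau_p$, so the total quantity of tokens ever available in $p$ by time $t$ equals $M_p + \sum_{q \in p\inc} z_q(t-\tau_p) = x_p(t-\tau_p)$. For a conflict place $p \in \Pconflict$, every downstream transition $q$ satisfies $q\inc = \{p\}$, which makes $q$ activated as soon as a token in $p$ becomes available; the earliest rule then forces immediate consumption, yielding~\eqref{eq:pnpriority2}. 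For a synchronization transition $q \in \Qsync$, each upstream place $p$ has $q$ as sole output and thus contributes exactly $x_p(t-\tau_p) - z_q(t^-)$ tokens available at instant $t$; the earliest rule fires $q$ the maximum possible number of times at $t$, which is the minimum over $p \in q\inc$, giving~\eqref{eq:pnpriority3}.

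For the priority equations~\eqref{eq:pnpriority4}--\eqref{eq:pnpriority5}, I would argue that the firings at any instant $t$ can be reordered so that all firings of $p\outp$ happen first, followed by all firings of $p\outm$: no token deposited by any firing at~$t$ has had time to age, so a transition that becomes unfireable at some point during the firings at~$t$ remains unfireable until the next time-elapsing step. Combined with the priority rule, this forces $p\outp$ to be fired maximally before any $p\outm$ firing occurs. The availability counts are then $x_p(t-\tau_p) - z_{p\outp}(t^-) - z_{p\outm}(t^-)$ for $p$ (both downstream transitions have already been subtracted) and $x_r(t - \tau_r) - z_{p\outp}(t^-)$ for each $r \in (p\outp)\inc \setminus \{p\}$; taking the minimum and adding $z_{p\outp}(t^-)$ recovers~\eqref{eq:pnpriority4}. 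Equation~\eqref{eq:pnpriority5} follows by the same reasoning for $p\outm$, with the updated count of tokens in $p$ now being $x_p(t-\tau_p) - z_{p\outp}(t) - z_{p\outm}(t^-)$.

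The main obstacle is the clean handling of the priority semantics at simultaneous firings, namely making the reordering argument precise and verifying that $p\outp$ remains unfireable after it first becomes so during instant~$t$, despite other transitions possibly firing in between and depositing age-zero tokens in upstream places. A secondary subtlety is the \cadlag{} discipline: the distinction between $z_q(t)$ and $z_q(t^-)$ must be respected throughout, and one has to check the equations at non-firing instants as well, which reduces to observing that the minima on the right-hand sides are constant between successive firings of the relevant transition — any strict increase would, under the earliest rule, immediately trigger a firing, contradicting the choice of $t$.
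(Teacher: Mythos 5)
Your proposal is correct and takes essentially the same route as the paper's own proof: \eqref{eq:pnpriority1} by bookkeeping, \eqref{eq:pnpriority2}--\eqref{eq:pnpriority3} by counting available tokens and deriving a contradiction with the earliest-behavior rule just after the instant $t$, and \eqref{eq:pnpriority4}--\eqref{eq:pnpriority5} via a reordering of the simultaneous firings at an instant, justified by the positivity of the holding times and combined with the priority rule --- which is exactly the content of the paper's Lemma~\ref{lemma:trace} and the case analysis of Appendix~\ref{app:semantics}. The points you flag as remaining work (persistence of unfireability of $p\outp$ within an instant, the $z_q(t)$ versus $z_q(t^-)$ bookkeeping, and checking non-firing instants) are precisely the details the paper's appendix fills in, with no further idea needed.
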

We refer to Appendix~\ref{app:semantics} for a detailed proof of this statement. 
Notice that, if we do not restrict to the earliest behavior semantics, the constraints~\eqref{eq:pnpriority2}--\eqref{eq:pnpriority5} are relaxed to inequalities.

So far, we have described the dynamics of timed Petri nets in the continuous time setting. However, since the Petri net of our case study is a model of a real system which is implemented in silico, we need to investigate the dynamics in discrete time as well. 
In more details, assuming that all the quantities $\tau_p$ are multiple of an elementary time step $\delta > 0$, the discrete-time version of the semantics of the Petri net restricts the transitions $\trans[d]$ to the case where $d$ is a multiple of $\delta$. 
In this case, on top of being {\cadlag}, the functions $x_p$ and $z_q$ are constant on any interval of the form $[k \delta, (k+1) \delta)$ for all $k \in \N$. 
Then, we can verify that the following result holds:
\begin{proposition}
In the discrete time semantics, the counter variables $x_p$ and $z_q$ satisfy the constraints~\eqref{eq:pnpriority1}--\eqref{eq:pnpriority5} for all $t \geq 0$, independently of the choice of the elementary time step $\delta$.
\end{proposition}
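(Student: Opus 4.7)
The plan is to reduce the proposition to Theorem~\ref{th:semantics} by showing that, when every holding time $\tau_p$ is a multiple of the step $\delta$, any discrete-time execution trace is also a valid execution trace of the continuous-time earliest behavior semantics, giving rise to the same counter variables $x_p, z_q$.

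First, I would observe that the inclusion of transition steps is free: restricting each $\trans[d]$ to $d \in \delta\mathbb{N}$ does not change the legitimacy of any individual firing transition $\trans[q]$ or time-elapsing transition $\trans[d]$, so any discrete-time trace is syntactically a continuous-time trace. What needs to be checked is that the earliest behavior constraint (no place contains a token of age strictly greater than $\tau_p$ while a downstream transition is activated) is respected in continuous time too. I would argue this by induction on the sequence of firing blocks in the trace, ordered by time. At time $0$, the only tokens available for firing are those of the initial marking (with infinite age), so the earliest behavior forces firing exactly the transitions fired in the discrete-time trace at instant $0$; these produce new tokens of age $0$. After such a block at instant $t$, the next instant at which any downstream transition could possibly fire is $t + \tau_p$ for some $p$ carrying a freshly deposited token, hence again a multiple of $\delta$. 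By induction, every instant at which the continuous-time earliest semantics forces a firing is a multiple of $\delta$, which is precisely what the discrete-time trace already does.

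Once this equivalence is established, the counter variables produced by the discrete-time trace coincide pointwise with those of the corresponding continuous-time trace. Theorem~\ref{th:semantics} then yields that the constraints \eqref{eq:pnpriority1}--\eqref{eq:pnpriority5} hold at every $t\geq 0$, with the same initialization \eqref{eq:init}. The statement that this does not depend on the choice of $\delta$ follows because the constraints themselves make no reference to $\delta$: they only relate values of $x_p$, $z_q$, and their left limits at points $t$ and $t - \tau_p$, all of which are intrinsic to the trace.

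The main subtlety I expect is not in the counting arguments — those are inherited from Theorem~\ref{th:semantics} — but in the inductive argument that earliest behavior in continuous time cannot produce any firing strictly between two grid points. This is exactly where the hypothesis that $\tau_p \in \delta \mathbb{N}$ is used: it guarantees closure of the set of candidate firing instants under the map ``add a holding time''. If one dropped this assumption, a continuous-time earliest firing could occur off the grid, and the two semantics would diverge, so the hypothesis cannot be relaxed without changing the statement.
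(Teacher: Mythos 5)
Your proposal is correct and follows what the paper intends (it states this proposition as a direct verification, essentially a reduction to Theorem~\ref{th:semantics}): since every $\tau_p\in\delta\N$, a discrete-time trace is itself an execution trace of the earliest behavior semantics, its states satisfy the same earliest-behavior condition, and the constraints~\eqref{eq:pnpriority1}--\eqref{eq:pnpriority5} make no reference to $\delta$. Your additional induction showing that continuous-time earliest firings can only occur on the grid $\delta\N$ is not strictly needed for the stated inclusion, but it correctly isolates where the hypothesis $\tau_p\in\delta\N$ is used and why the two semantics coincide.
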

In other words, the dynamics in continuous-time is a valid representation of the dynamics in discrete time 
which allows to abstract from the discretization time step. We also note that we can refine the constraint given in~\eqref{eq:pnpriority4} by replacing the left limit $z_{p\outm}(t^-)$ by an explicit value:
\begin{equation}
\forall p \in \Ppriority \,, \quad z_{p\outp}(t) = 
\begin{dcases}
\begin{multlined}
\bigl(x_p(t-\tau_p) - z_{p\outm}(t - \delta) \bigr) \\
\quad \wedge \bigwedge_{r \in (p\outp)\inc \, , \,  r \neq p} x_r(t - \tau_r) 
\end{multlined} & \text{if} \; t \in \delta\N \, , \\
\begin{multlined}
\bigl(x_p(t-\tau_p) - z_{p\outm}(t) \bigr) \\
\quad \wedge \bigwedge_{r \in (p\outp)\inc \, , \, r \neq p} x_r(t - \tau_r) 
\end{multlined} & \text{otherwise.}
\end{dcases}\label{eq:pnpriority4discrete}
\end{equation}
(Here and below, we denote by $\delta \N$ the set $\{0, \delta, 2\delta, \dots \}$.) The system formed by the constraints~\eqref{eq:pnpriority1}--\eqref{eq:pnpriority3}, \eqref{eq:pnpriority5}, \eqref{eq:pnpriority4discrete}  
is referred to as the \emph{$\delta$-discretization of the Petri net dynamics}.

The only source of non-determinism in the model that we consider is the routing policy in the conflict pattern (Figure~\ref{fig:configuration}(a)). 
In the sequel, we assume that the tokens are assigned according to a 
stationary probability distribution.
Given a free choice place $p \in \Pconflict$, we denote by $\pi_{qp}$ the probability that an available token is assigned to the transition $q \in p\out$. In the following, we consider a \emph{fluid approximation of the dynamics} of the system, in which the $x_p$ and $z_q$ are non-decreasing {\cadlag} functions from $\R$ to itself, and the routing policy degenerates in sharing the tokens in fractions $\pi_{qp}$. Equivalently, the fluid dynamics is defined by the constraints~\eqref{eq:pnpriority1}--\eqref{eq:pnpriority5} and the following additional constraints:
\begin{equation}
\forall p \in \Pconflict \, , \, \forall q \in p\out \,, \quad z_q(t) = \pi_{qp} x_p(t - \tau_p) \ . 
\label{eq:pnpriority6fluid}
\end{equation}
Note that the latter equation is still valid in the context of discrete time. By extension, the system formed by the constraints \eqref{eq:pnpriority1}--\eqref{eq:pnpriority3}, \eqref{eq:pnpriority5}--\eqref{eq:pnpriority6fluid} is referred to as the \emph{$\delta$-discretization of the fluid dynamics}.

\subsection{Application to our Petri net model of emergency call center}\label{ex:fluid_dynamics}
	
We illustrate Theorem~\ref{th:semantics} on the Petri net of Figure~\ref{fig:Petri_Net}.
We point out that in Figure~\ref{fig:Petri_Net}, we have omitted to specify the holding time of some places. By default, this holding time is set to a certain $\tau_\eps > 0$, and is meant to be negligible w.r.t.~the other holding times. 

For simplicity, we omit the counter variables of the places distinct from $p_1$ and $p_2$. Indeed, each of theses places $p$ has a unique input transition $q$, and its initial marking is $0$. Therefore, by definition, we have $x_p(t) = z_q(t)$ for all $t$, which means that $x_p$ can be trivially substituted in the constraints. Similarly, we omit the transitions which lead to places $p_1$ and $p_2$, as their counter variables correspond the counter variables of some transitions located upstream and shifted by the holding time of the place in between. Finally, we denote by $z_i$ the counter variables of transitions $q_i$, and by $x_i$ the counter variables of places $p_i$. We can verify that the fluid dynamics is then given by the following constraints:
\[
\allowdisplaybreaks
\begin{aligned}
z_1 (t) &= x_1(t - \tau_\eps) \\
z_2 (t) &= \power{z_1(t - \tau_\eps)}{\pie} \\
z_3 (t) &= \power{z_1(t - \tau_\eps)}{\piu} \\
z_4 (t) &= \power{z_1(t - \tau_\eps)}{\pia} \\
z_5 (t) &= (x_2(t- \tau_\eps) - z_6 (t^-)) \wedge z_2(t - \taue) \\
z_6 (t) &= (x_2(t- \tau_\eps) - z_5 (t)) \wedge z_3(t - \tauu - \tau_\eps) \\
z_7 (t) &= z_5(t - \taut)  \\
x_1 (t) &= N_1 + z_7(t) + z_3(t - \tauu) + z_4(t - \taua)\\
x_2 (t) &= N_2 + z_7(t - \taue') +z_6 (t - \tauu')
\end{aligned}
\]
They can be simplified into the following system:
\begin{equation}
\label{eqn:shortModel2}
\begin{aligned}
z_1(t) & = N_1 + z_5(t - \taut) + \piu z_1(t - \tauu - 2\tau_\eps) + \pia z_1(t - \taua - 2\tau_\eps) \\
z_5(t) & = \bigl(N_2 + z_5(t - \taut - \taue' - \tau_\eps) + z_6 (t - \tauu' - \tau_\eps) - z_6 (t^-) \bigr) \\
& \qquad \wedge \pie z_1(t - \taue -\tau_\eps)  \\
z_6(t) & = \bigl(N_2 + z_5(t - \taut - \taue' -\tau_\eps) + z_6 (t - \tauu' -\tau_\eps) - z_5 (t) \bigr) \\
& \qquad \wedge \piu z_1(t - \tauu -\tau_\eps) 
\end{aligned}
\end{equation}
which involve the counter variables $z_1$, $z_5$ and $z_6$ only. These variables correspond to the key characteristics of the system. They respectively represent the number of calls handled at level~1, and the number of extremely urgent and urgent calls handled at level~2, up to time $t$. All the other counter variables can be straightforwardly obtained from $z_1$, $z_5$ and $z_6$.

For the sake of readability, we slightly modify the original holding times $\taue$, $\tauu$, $\dots$ to incorporate the effect of $\tau_\eps$. In more details, we substitute $\taue$, $\tauu$, $\taua$, $\taue'$ and $\tauu'$ by $\taue -\tau_\eps$, $\tauu - 2\tau_\eps$, $\taua - 2\tau_\eps$, $\taue' - \tau_\eps$ and $\tauu' - \tau_\eps$ respectively. Then, System~\eqref{eqn:shortModel2} simply reads as:
\begin{equation}
\label{eqn:shortModel}
\begin{aligned}
z_1(t) & = N_1 + z_5(t - \taut) + \piu z_1(t - \tauu) + \pia z_1(t - \taua) \\
z_5(t) & = \bigl(N_2 + z_5(t - \taut - \taue') + z_6 (t - \tauu') - z_6 (t^-) \bigr) \wedge \pie z_1(t - \taue)  \\
z_6(t) & = \bigl(N_2 + z_5(t - \taut - \taue') + z_6 (t - \tauu') - z_5 (t) \bigr) \wedge \piu z_1(t - \tauu) 
\end{aligned}
\end{equation}
This is the system which we consider in the rest of the paper.

\section{Computing stationary regimes}\label{sec:stationary}

We investigate the stationary regimes of the fluid dynamics
associated with Petri nets with free choice and priority routing. More specifically, our goal is to characterize the non-decreasing {\cadlag} solutions $x_p$ and $z_q$ of the dynamics which behave ultimately as affine functions $t \mapsto u + \rho t$ ($u \in \R$ and $\rho \in \Rplus$). By {\em ultimately}, we mean that the property holds
for $t$ large enough.
In this case, the scalar $\rho$ corresponds to the asymptotic throughput of the associated place or transition. 
However, if the functions $x_p$ and $z_q$ are continuous, and a fortiori if they are affine, their values at points $t$ and $t^-$ coincide, and then, the effect of the 
priority rule on the dynamics vanishes (see Equation~\eqref{eq:pnpriority4}).
Hence, looking for ultimately affine solutions of the continuous
time equations might look as an ill-posed problem, if one
interprets it in a naive way.
In contrast, looking for the ultimately affine solutions of the 
$\delta$-discretization of the fluid dynamics is a perfectly
well-posed problem.
In other words, we aim at determining the solutions $x_p$ and $z_q$ of the discrete dynamics which coincide with affine functions at points $k \delta$ for all sufficiently large $k \in \N$. 
These solutions are referred to as the \emph{stationary solutions} of the dynamics. As we shall prove in Theorem~\ref{th:stationary}, the characterization of these solutions does not depend on the value of $\delta$, leading to a proper
definition of ultimately affine solutions of the continuous time dynamics.

In order to determine the stationary regimes, we use the notion of germs of affine functions. We introduce an equivalence relation $\sim$ over functions from $\R$ to itself, defined by $f \sim g$ if $f(t)$ and $g(t)$ are equal for all $t \in \delta \N$ sufficiently large. A {\em germ of function} (at point infinity) is an equivalence class of functions with respect to the relation $\sim$. 
For brevity, we refer to the germs of affine functions as {\em affine germs}, and we denote by $(\rho, u)$ the germ of the function $t \mapsto u + \rho t$. In this setting, our goal is to determine the affine germs of the counter variables of the Petri net in the stationary regimes. 

Given two functions $f$ and $g$ of affine germs $(\rho, u)$ and $(\rho', u')$ respectively, it is easy to show that $f(t) \leq g(t)$ for all sufficiently large $t \in \delta \N$ if, and only if, the couple $(\rho, u)$ is smaller than or equal to $(\rho', u')$ in the lexicographic order.
Moreover, the affine germ of the function $f + g$ is simply given by the germ $(\rho + \rho', u + u')$, which we denote by $(\rho, u) + (\rho', u')$ by abuse of notation. 
As a consequence, affine germs provide an ordered group. 
Let us add to this group a greatest element $\top$, with the convention that $\top +  (\rho, u) = (\rho, u) + \top = \top$. 
Then, we obtain the \emph{tropical (min-plus) semiring of affine germs} $(\G, \wedge, +)$, where $\G$ is defined as $\{\top \} \cup \R^2$, and for all $x, y \in \G$, $x \wedge y$ stands for the minimum of $x$ and $y$ in lexicographic order (extended to $\top$). 
Since in $\G$, the addition plays the role of the multiplicative law, the additive inversion defined by $-(\rho, u) := (-\rho, -u)$ corresponds to a division over $\G$. This makes $\G$ a semifield, \ie, in loose terms, a structure similar to a field, except that the additive law has no inverse. Finally, we can define the multiplication by a scalar $\lambda \in \R$ by $\lambda (\rho, u) := (\lambda \rho, \lambda u)$. 
When $\lambda \in \N$, this can be understood as an exponentiation operation in $\G$. 

Instantiating the functions $x_p$ and $z_q$ by affine asymptotics $t \mapsto u_p + t \rho_p$ and $t \mapsto u_q + t \rho_q$ in the $\delta$-discretization of the fluid dynamics leads to the following counterparts of the constraints~\eqref{eq:pnpriority1}, \eqref{eq:pnpriority3}, \eqref{eq:pnpriority5} and~\eqref{eq:pnpriority6fluid}, the variables being now elements of the semifield $\G$ of germs:
\begin{subequations}\label{eq:fixpoint_germ}
\allowdisplaybreaks
\begin{align}
\forall p \in \Pcal \, , \qquad (\rho_p, u_p) & = (0, M_p) + \sum_{q \in p\inc} (\rho_q, u_q) \label{eq:fixpoint_germ1} \\
\forall p \in \Pconflict \, , \forall q \in p\out \, , \qquad (\rho_q, u_q) & = \pi_{qp} (\rho_p, u_p - \rho_p \tau_p) 
\label{eq:fixpoint_germ2} \\
\forall q \in \Qsync \, , \qquad (\rho_q, u_q) & = \bigwedge_{p \in q\inc} (\rho_p, u_p - \rho_p \tau_p) \label{eq:fixpoint_germ3}\\
\forall p \in \Ppriority \, , \qquad 
(\rho_{p\outm}, u_{p\outm}) & = 
\begin{multlined}[t]
(\rho_p - \rho_{p\outp}, u_p - \rho_p \tau_p - u_{p\outp}) \\
\wedge 
\bigwedge_{r \in (p\outm)\inc \, , \, r \neq p} (\rho_r, u_r - \rho_r \tau_r)
\end{multlined} \label{eq:fixpoint_germ4}
\end{align}
\end{subequations}

Given $p \in \Ppriority$, the transposition of~\eqref{eq:pnpriority4} (or equivalently~\eqref{eq:pnpriority4discrete}) to germs is more elaborate due to the occurrence of the left limit $x_{p\outm}(t^-)$. We obtain:
\begin{equation} 
(\rho_{p\outp}, u_{p\outp}) = 
\begin{dcases}	
\begin{multlined}
(\rho_p - \rho_{p\outm}, u_p - \rho_p \tau_p - u_{p\outm}) \\
\wedge \bigwedge_{r \in (p\outp)\inc \, , \, r \neq p} (\rho_r, u_r - \rho_r \tau_r)
\end{multlined}
& \text{if}\; \rho_{p\outm} = 0 \, , \\
\bigwedge_{r \in (p\outp)\inc \, , \, r \neq p} (\rho_r, u_r - \rho_r \tau_r)
& \text{otherwise.} 
\end{dcases}
\tag{\ref{eq:fixpoint_germ}e}\label{eq:fixpoint_germ5}
\end{equation}

The correctness of these constraints is stated in the following result (see Appendix~\ref{app:stationary} for a detailed proof):
\begin{theorem}\label{th:stationary}
The affine germs of the stationary solutions of the $\delta$-discretization of the fluid dynamics are precisely the solutions of System~\eqref{eq:fixpoint_germ} such that $\rho_p, \rho_q \geq 0$ ($p \in \Pcal$, $q \in \Qcal$).
\end{theorem}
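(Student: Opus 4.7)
The plan is to prove both directions using the dictionary between the asymptotic behaviour of ultimately affine \cadlag{} counters and affine germs. For the \emph{forward} direction, start from a stationary solution of the $\delta$-discretization, so that for all $t \in \delta\N$ large enough, $x_p(t) = u_p + \rho_p t$ and $z_q(t) = u_q + \rho_q t$. Since the counters are non-decreasing, one immediately has $\rho_p, \rho_q \geq 0$. The key translation rules are elementary: the germ of $f(\cdot - \tau)$ is $(\rho, u - \rho\tau)$; the germ of $f + g$ is the coordinate-wise sum; and the pointwise minimum of two such functions is, for large $t \in \delta \N$, the affine function with the lex-smaller germ. Applying these rules to the constraints~\eqref{eq:pnpriority1}, \eqref{eq:pnpriority3}, \eqref{eq:pnpriority5} and~\eqref{eq:pnpriority6fluid}, none of which involves a left limit, directly yields~\eqref{eq:fixpoint_germ1}, \eqref{eq:fixpoint_germ3}, \eqref{eq:fixpoint_germ4} and~\eqref{eq:fixpoint_germ2} respectively.

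The main obstacle is the priority constraint~\eqref{eq:pnpriority4discrete}, because of the left-limit term $z_{p\outm}(t-\delta)$. At a grid point $t$, stationarity gives $z_{p\outm}(t-\delta) = u_{p\outm} + \rho_{p\outm}(t-\delta)$, so the first argument of the minimum has germ $(\rho_p - \rho_{p\outm},\; u_p - \rho_p \tau_p - u_{p\outm} + \rho_{p\outm}\delta)$. When $\rho_{p\outm} = 0$ this germ is $\delta$-independent and coincides with the first argument of~\eqref{eq:fixpoint_germ5}, giving case~1. When $\rho_{p\outm} > 0$, I would argue that this first argument cannot be the strict lex-minimum: were it so, then $z_{p\outp}(t) = x_p(t-\tau_p) - z_{p\outm}(t-\delta)$, and substituting into~\eqref{eq:pnpriority5} forces $z_{p\outm}(t) \leq z_{p\outm}(t-\delta)$ for all large grid points; combined with monotonicity, this implies $\rho_{p\outm} = 0$, a contradiction. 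Hence the $\delta$-dependent branch can be dropped and case~2 of~\eqref{eq:fixpoint_germ5} captures the correct value.

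For the \emph{converse}, given a solution $(\rho, u) \in \G^{\Pcal \cup \Qcal}$ of~\eqref{eq:fixpoint_germ} with non-negative slopes, define $x_p(t) := u_p + \rho_p t$ and $z_q(t) := u_q + \rho_q t$ for $t$ beyond some threshold $T$, and extend them to $[0, T)$ by non-decreasing \cadlag{} functions that respect~\eqref{eq:init} and the dynamics. The same translation rules, together with the priority case analysis used in the other direction, show that every constraint of the $\delta$-discretized fluid dynamics is satisfied for $t \geq T$; the existence of a compatible extension on $[0, T)$ uses the causality of the equations (the value at $t$ depends only on strictly earlier values, or on simultaneous values already fixed by the priority rule). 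Since System~\eqref{eq:fixpoint_germ} itself does not mention $\delta$, the $\delta$-independence of the stationary germs follows as a by-product. The hardest step is clearly the priority case $\rho_{p\outm} > 0$: it is here that the \emph{semantics} of the priority rule, and not merely the algebra of the equations, is needed to discard the $\delta$-dependent branch.
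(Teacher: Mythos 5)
Your forward direction is sound and matches the paper's proof in substance: the translation rules for shifts, sums and minima give \eqref{eq:fixpoint_germ1}--\eqref{eq:fixpoint_germ4} at once, and your way of discarding the $\delta$-dependent branch of \eqref{eq:pnpriority4discrete} when $\rho_{p\outm}>0$ (if it were the strict lexicographic minimum, \eqref{eq:pnpriority5} would force $z_{p\outm}(t)\leq z_{p\outm}(t-\delta)$, hence $\rho_{p\outm}=0$) is a legitimate variant of the paper's purely germ-level argument, which derives $(\rho_{p\outp},u_{p\outp})\prec(\rho_p-\rho_{p\outm},\,u_p-\rho_p\tau_p-u_{p\outm}+\rho_{p\outm}\delta)$ directly from \eqref{eq:fixpoint_germ4} and $\rho_{p\outm}\delta>0$.

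The converse, however, has a genuine gap. You propose to impose the affine formulas only for $t\geq T$ and to ``extend them to $[0,T)$ by non-decreasing \cadlag{} functions that respect~\eqref{eq:init} and the dynamics'', invoking causality for the existence of such an extension. Causality runs the wrong way for this: because the holding times are positive multiples of $\delta$, the $\delta$-discretized dynamics together with \eqref{eq:init} determines the whole trajectory by forward recursion, so there is no freedom left to match a prescribed affine tail. In fact the trajectory issued from the initial marking need not become affine at all (the paper's Section~\ref{sec:experiments}, Figure~\ref{fig:cvgFluid}(d), exhibits exactly such non-convergence), so not every solution of System~\eqref{eq:fixpoint_germ} can be realized as the tail of the trajectory satisfying \eqref{eq:init} — and the theorem does not ask for that. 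The paper's construction avoids the issue entirely: it defines $x_p(t)=u_p+\rho_p k\delta$ and $z_q(t)=u_q+\rho_q k\delta$ on every interval $[k\delta,(k+1)\delta)$, $k\in\N$, and verifies the constraints directly from the germ identities, with no backward-matching step. A second, related weakness: even granting your extension, for $T\leq t<T+\max_p\tau_p$ the constraints involve the non-affine prefix values, so ``every constraint holds for $t\geq T$'' does not follow from the germ equations alone.

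Finally, your converse glosses over the verification of \eqref{eq:pnpriority4discrete} at non-grid instants, where the constraint takes the other branch (with $z_{p\outm}(t)$ in place of $z_{p\outm}(t-\delta)$). This is not automatic from ``the priority case analysis used in the other direction'': the paper needs a short separate argument — if the minimum at the grid point $k\delta$ is attained by the term $x_p(k\delta-\tau_p)-z_{p\outm}((k-1)\delta)$, then monotonicity of $z_{p\outm}$ combined with \eqref{eq:pnpriority5} forces $z_{p\outp}(k\delta)=x_p(k\delta-\tau_p)-z_{p\outm}(k\delta)$, after which the identity propagates to the open interval. You should either reproduce this case analysis or restructure the converse along the paper's lines.
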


Since the expressions at the right hand side of the constraints of System~\eqref{eq:fixpoint_germ} involve minima of linear terms, these expressions can be interpreted as fractional functions over the tropical semifield $\G$.
In this way, System~\eqref{eq:fixpoint_germ} can be thought of as a set of tropical polynomial constraints (or more precisely, rational constraints).
 
The solutions of tropical polynomial systems is a topic of current interest, 
owing to its relations with fundamental algorithmic issues concerning
classical polynomial system solving over the reals. Here, we describe a simple method to solve  System~\eqref{eq:fixpoint_germ}, which is akin to \emph{policy search} in stochastic control. Observe that System~\eqref{eq:fixpoint_germ} corresponds to a fixpoint equation $(\rho, u) = f(\rho, u)$, where the function $f$ can be expressed as the infimum $\bigwedge_{\pi} f^{\pi}$ of finitely many linear (affine) maps $f^\pi$. In more details, every function $f^\pi$ is obtained by selecting one term for each minimum operation $\bigwedge$ occurring in the constraints (for instance, in~\eqref{eq:fixpoint_germ3}, we select one term $(\rho_p, u_p - \rho_p \tau_p)$ with $p \in q\inc$). For every selection~$\pi$, we can solve the associated linear system $(\rho, u) = f^\pi(\rho, u)$, and under some structural assumptions on the Petri net, the solution $(\rho^\pi, u^\pi)$ is unique. If $f^\pi (\rho^\pi, u^\pi) = f(\rho^\pi, u^\pi)$, \ie~in every constraint, the term we selected is smaller than or equal to the other terms appearing in the minimum, then $(\rho^\pi, u^\pi)$ forms a solution of System~\eqref{eq:fixpoint_germ} associated with the selection $\pi$. Otherwise, the selection $\pi$ does not lead to any solution. Iterating this technique over the set of selections provides all the solutions of System~\eqref{eq:fixpoint_germ}. Every iteration can be done in polynomial time. However, since there is an exponential number of possible selections, the overall time complexity of the method is exponential in the size of the Petri net.

\section{Application to the emergency call center}\label{sec:application}

We now apply the results of Section~\ref{sec:stationary} to determine the stationary regimes of the fluid dynamics associated with our timed Petri net model of emergency call center. As in Section~\ref{ex:fluid_dynamics}, we consider the subsystem reduced to the variables $z_1$, $z_5$ and $z_6$. The corresponding system of constraints over the germ variables $(u_1, \rho_1)$, $(u_5, \rho_5)$ and $(u_6, \rho_6)$ is given by:
\begin{subequations}\label{eq:germs}
\allowdisplaybreaks
\begin{align}
(\rho_1, u_1) & = 
\begin{multlined}[t]
\bigl(\rho_5 + \piu \rho_1 + \pia \rho_1, \\
N_1 + (u_5 - \rho_5 \taut) + \piu (u_1 - \rho_1 \tauu) + \pia (u_1 - \rho_1 \taua)\bigr)
\end{multlined} \label{eq:germs1} \\
(\rho_5, u_5) & =
\begin{cases}
\bigl(\rho_5,  N_2 + u_5 - \rho_5 (\taut + \taue') \bigr) 
\wedge \pie (\rho_1, u_1 - \rho_1 \taue)  & \text{if}\; \rho_6 = 0 \\[0.5ex]
\pie (\rho_1, u_1 - \rho_1 \taue) 
& \text{if}\; \rho_6 > 0
\end{cases} \label{eq:germs2} \\
(\rho_6, u_6) & = 
\bigl(\rho_6, N_2 - \rho_5 (\taut + \taue') + (u_6 - \rho_6 \tauu')\bigr)
\wedge \piu (\rho_1 , u_1 - \rho_1 \tauu)  \label{eq:germs3}
\end{align}
\end{subequations}
To solve this system, it is convenient to introduce the following quantity
\[
\taucyc := \pie (\taue + \taut) + \piu \tauu + \pia \taua \, ,
\]
which represents the average time of treatment of a call at level~1 of the model. Note that we exclude the trivial case where $\rho_1 = 0$ (and subsequently $\rho_5 = \rho_6 = 0$), since it cannot occur unless the quantity $N_1$ is null.

The $\rho$-part of~\eqref{eq:germs1} and~\eqref{eq:germs3} show that 
\[
\rho_5 = \pie \rho_1 \, , \qquad 0 \leq \rho_6 \leq \piu \rho_1 \, .
\] 

We start by considering the case where $\rho_6 = 0$. Since $\rho_1 > 0$, the minimum in~\eqref{eq:germs3} is necessarily attained by the left term. From this, we deduce 
\[
\rho_1 = \frac{N_2}{\pie (\taut + \taue')} \, .
\]
As $(\rho_5, u_5) \leq \pie (\rho_1, u_1 - \rho_1 \taue)$ (by~\eqref{eq:germs2}) and $\rho_5 = \pie \rho_1$, the inequality $u_5 \leq \pie (u_1 - \rho_1 \taue)$ holds. Using the $u$-part of~\eqref{eq:germs1}, we can show that this amounts to the inequality
\[
\frac{N_2}{N_1} \leq r_1 := \frac{\pie (\taut + \taue')}{\taucyc} \, .
\]

We now assume that $\rho_6 > 0$. The fact that $u_5 = \pie(u_1 - \rho_1 \taue)$ (by~\eqref{eq:germs2}) leads to the identity
\[
\rho_1 = \frac{N_1}{\taucyc} \, .  
\]
It remains to distinguish the subcases corresponding to the minimum in~\eqref{eq:germs3}. 
\begin{asparaitem}[\textbullet]
\item Suppose that the minimum is attained by the left term. We deduce that:
\[
\rho_6 = \frac{N_2}{\tauu'} - \frac{N_1}{\taucyc} \frac{\pie(\taut + \taue')}{\tauu'} = \frac{N_2 - N_1 r_1}{\tauu'} \, .
\]
Since $0 < \rho_6 \leq \piu \rho_1$, we also derive:
\[
r_1 < \frac{N_2}{N_1} \leq r_2 := \frac{\pie (\taut + \taue') + \piu \tauu'}{\taucyc} \, .
\]

\item If the minimum is reached by the right term, then we have $\rho_6 = \piu \rho_1$, or equivalently $\rho_6 = \piu \frac{N_1}{\taucyc}$. 
Moreover, we necessarily have $u_6 \leq N_2 - \rho_5 (\taut + \taue') + (u_6 - \rho_6 \tauu')$, which provides $\frac{N_2}{N_1} \geq r_2$. 
Note that the latter inequality is strict as soon as the minimum in~\eqref{eq:germs3} is attained by the right term only.
\end{asparaitem}

\begin{table}[t]
\caption{The normalized throughputs $\rho_1$, $\rho_5$ and $\rho_6$ as piecewise linear functions of $N_2 / N_1$.} \label{tab:throughputs}
\begin{center}
\setlength{\tabcolsep}{0.25cm}
\renewcommand{\arraystretch}{1.8}
\begin{tabular}{>{$\displaystyle}c<{$}>{$\displaystyle}c<{$}>{$\displaystyle}c<{$}>{$\displaystyle}c<{$}}
\toprule
& 0 \leq N_2/N_1 \leq r_1 & r_1 \leq N_2/N_1 \leq r_2 & r_2 \leq N_2/N_1 \\
\midrule
\rho_1 / \rho^* & \frac{\taucyc}{\pie (\taut + \taue')} \frac{N_2}{N_1} & 1 & 1 \\
\rho_5 / \rho^* & \frac{\taucyc}{\taut + \taue'} \frac{N_2}{N_1} & \pie & \pie \\
\rho_6 / \rho^* & 0 & \frac{\taucyc}{\tauu'} \Bigl(\frac{N_2}{N_1} - r_1\Bigr) & \piu \\[1ex]
\bottomrule
\end{tabular} 
\end{center}
\end{table}

To summarize, we report the possible values of the throughputs $\rho_1$, $\rho_5$ and $\rho_6$ in Table~\ref{tab:throughputs} in the stationary regimes. We normalize these values by a quantity $\rho^*$ which corresponds to the throughput (of transition $q_1$) in an ``ideal'' call center which involves as many level~2 operators as necessary, \ie~$N_2 = +\infty$. Then, the throughput $\rho^*$ is given by $N_1 / \taucyc$, where $\taucyc := \pie (\taue + \taut) + \piu \tauu + \pia \taua$ represents the average time of treatment at level~1. 

As shown in Table~\ref{tab:throughputs}, the ratios $\rho_1 / \rho^*$, $\rho_5 / \rho^*$ and $\rho_6 / \rho^*$ are piecewise linear functions of the ratio $N_2 / N_1$. The non-differentiability points are given by:
\[
r_1 := \frac{\pie (\taut + \taue')}{\taucyc} \qquad \quad
r_2 := \frac{\pie (\taut + \taue') + \piu \tauu'}{\taucyc} \, . 
\]
They separate three phases:
\begin{asparaenum}[(i)]
\item when $N_2/N_1$ is strictly smaller than $r_1$, the number of level~2 operators is so small that some extremely urgent calls cannot be handled, and no urgent call is handled. 
This 
is why the throughput of the latter calls at level~2 is null. 
Also, level~1 operators are slowed down by the congestion of level~2,
since, in the treatment of an extremely urgent call,
a level~1 operator cannot be released until the
call is handled by a level~2 operator. 

\item when $N_2/N_1$ is between $r_1$ and $r_2$, there are enough level~2 operators to handle all the extremely urgent calls,
which is why the throughput $\rho_5$ is equal to $\rho_1$ multiplied by the proportion $\pie$ of extremely urgent calls. As a consequence, level~2 is no longer slowing down level~1 (the throughput $\rho_1$ reaches its maximal value $\rho^*$). However, the throughput of urgent calls at level~2 is still limited because $N_2$ is not sufficiently large.
\item if $N_2/N_1$ is larger than $r_2$, the three throughputs reach their maximal values. This means that level~2 is sufficiently well-staffed w.r.t.~level~1.
\end{asparaenum}

This analysis provides a qualitative method to determine an optimal dimensioning of the system in stationary regimes. Given a fixed $N_1$, the number $N_2$ of level $2$ operators should be taken to be the minimal integer such that $N_2 / N_1 \geq r_2$. This ensures that the level $2$ properly handles the calls transmitted by the level $1$ (all calls are treated). Then, $N_1$ should be the minimal integer such that $\rho_1 = \frac{N_1}{\taucyc}$ dominates the arrival rate of calls. 

\begin{figure}[t]
\begin{center}
\includegraphics{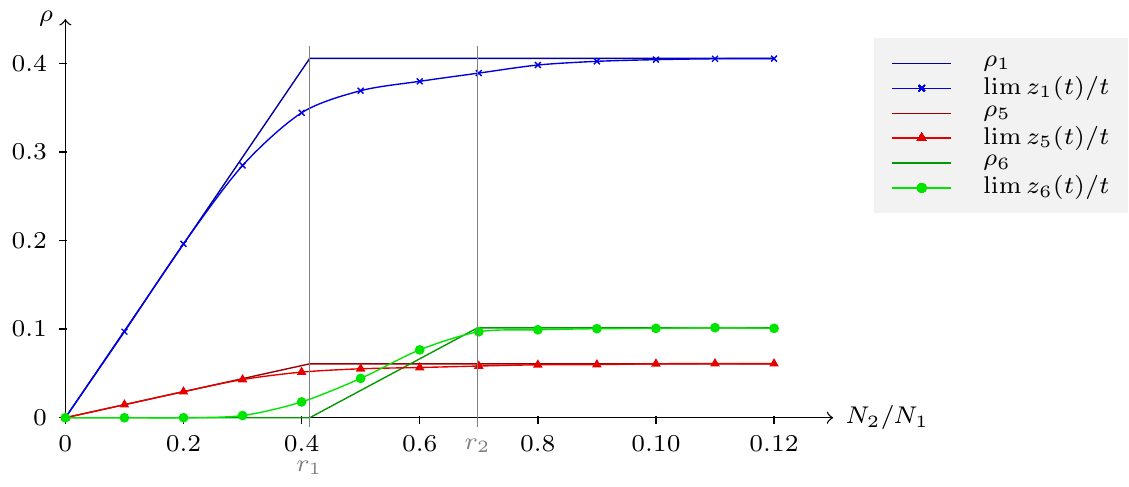}
\end{center}
\caption{Comparison of the throughputs of the non-fluid simulations with the theoretical throughputs (fluid model). The three phases are identified by two vertical lines.} \label{fig:phasesDiagramm} 
\end{figure}

\section{Experiments}\label{sec:experiments}

We finally compare the analytical results of Section~\ref{sec:application}, obtained in the fluid setting, with the asymptotic throughputs of the Petri net provided by simulations.

\subsection{Asymptotic behavior of the non-fluid dynamics.} 

We have implemented the $\delta$-discretization of the non-fluid dynamics (Equations~\eqref{eq:pnpriority1}--\eqref{eq:pnpriority3}, \eqref{eq:pnpriority5} and \eqref{eq:pnpriority4discrete}) since this setting is the closest to reality. Recall that, in this case, tokens are routed towards transitions $q_2$, $q_3$ and $q_4$ randomly according to a constant probability distribution. We assume that holding times are given by integer numbers of seconds, so that we take $\delta = 1\,\text{s}$. In this way, we compute the quantities $z_1(t)$, $z_5(t)$ and $z_6(t)$ by induction on $t \in \N$ using the equations describing the dynamics. In the simulations, we choose holding times and probabilities which are representative of the urgency of calls. 

Figure~\ref{fig:phasesDiagramm} compares the limits when $t \to +\infty$ of the throughputs $z_1(t)/t$, $z_5(t)/t$, $z_6(t)/t$ of the ``real'' system, with the throughputs $\rho_1$, $\rho_5$ and $\rho_6$ of the stationary solutions which have been determined in Section~\ref{sec:application}. The latter are simply computed using the analytical formul{\ae} of Table~\ref{tab:throughputs}. We estimate the limits of the throughput $z_i(t)/t$ by evaluating the latter quantity for
$t = 10^6 \,\text{s}$. As shown in Figure~\ref{fig:phasesDiagramm}, these estimations confirm the existence of three phases, as described in the previous section. The convergence of $z_i(t)/t$ towards the throughputs $\rho_i$ is mostly reached in the two extreme phases. In the intermediate phase, the difference between the limit of $z_i(t)/t$ and the throughput $\rho_i$ is more important. 
This originates from the stochastic nature of the routing, which causes more variations in the realization of the minima in the $z_i(t)$: the throughput of $q_6$ increases and the throughputs of $q_1$ and $q_5$ decrease.

\subsection{Asymptotic behavior of the fluid dynamics.} We have also simulated the discrete-time fluid dynamics (using Equations~\eqref{eq:pnpriority1}--\eqref{eq:pnpriority3} and~\eqref{eq:pnpriority5}--\eqref{eq:pnpriority6fluid}). All simulations have been computed with exact rationals in $\mathbb{Q}$.

In most cases, we observe that the corresponding asymptotic throughputs converge to the throughputs of the stationary solutions. This is illustrated in Figures~\ref{fig:cvgFluid}(a), \ref{fig:cvgFluid}(b) and~\ref{fig:cvgFluid}(c), which are obtained using the same set of holding times, and by varying the ratio $N_2 / N_1$ (lower, intermediate and upper phase respectively). 

However, there are also cases in which the convergence does not hold. In the experiments we have made, this happens only in the lower phase and in the intermediate phase, that is, when $N_2/N_1 < r_2$. This is illustrated in Figure~\ref{fig:cvgFluid}(d), in which we have increased $\taue'$ by one unit of time in comparison to Figure~\ref{fig:cvgFluid}(b). Such cases suggest the existence of other kinds of stationary regimes of the dynamics, in which the system oscillates between different phases. We remark that the non-convergence appears to be related to the existence of arithmetical relationships between the holding times of places. An interpretation lies in the fact that, if cycle times are not coprime in the system, phenomena of synchronization may lead to recurrent slow-down of extreme urgent calls by urgent calls in the two lower phases, which could lower the throughput of the system.

\begin{figure}[t]
\begin{center}
\includegraphics{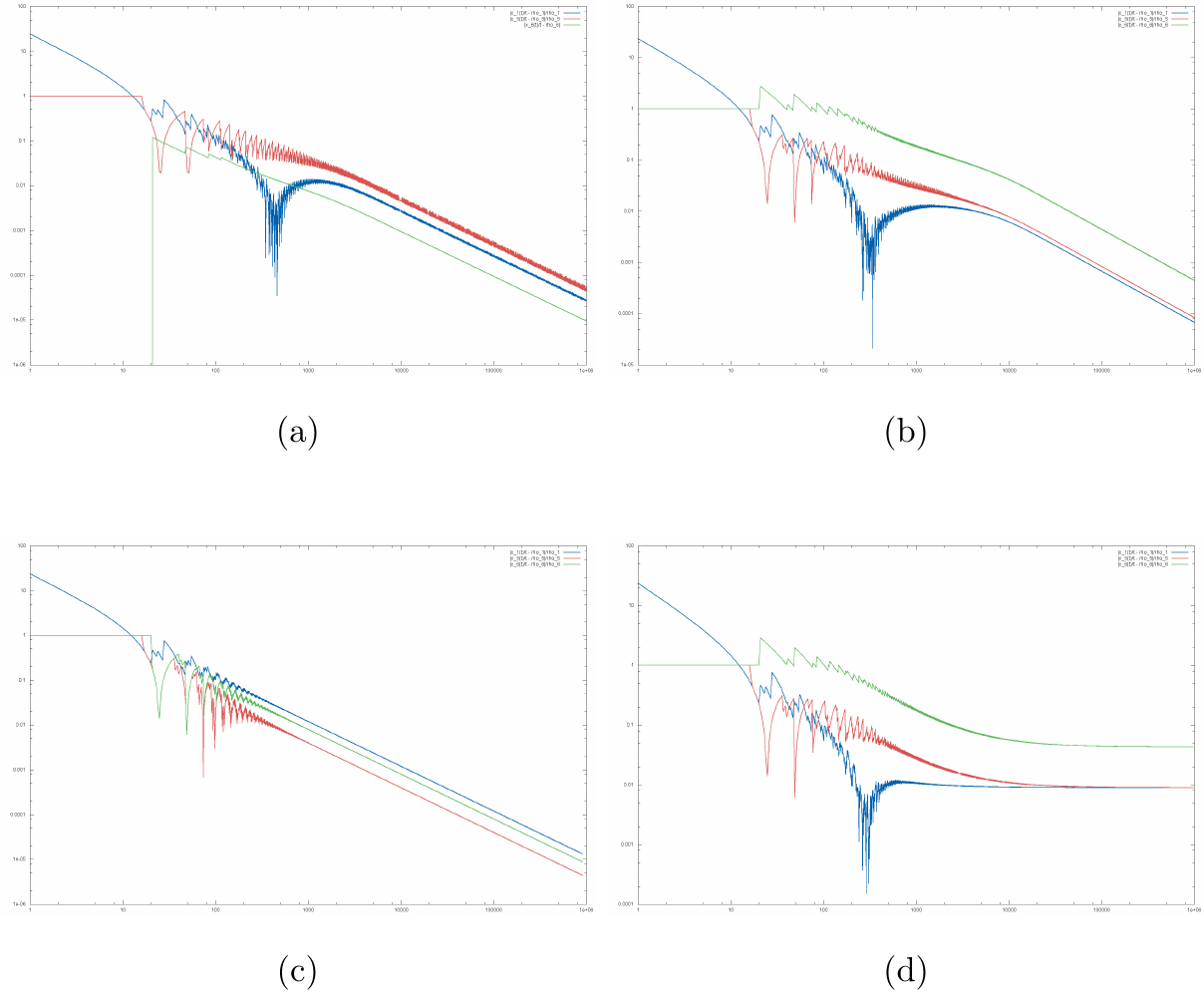}
\caption{Comparison of the fluid dymamics with the stationary regimes. Error ratios $|z_i(t)/t - \rho_i| / \rho_i$ are plotted in log-log scale, respectively in blue, red and green when $i = 1, 5, 6$.}\label{fig:cvgFluid}
\end{center}
\end{figure}

\section{Concluding remarks}

We have shown that timed Petri nets with free choice and priority routing can be analyzed by means of tropical geometry. This allows us to identify the congestion phases
in the fluid version of the dynamics of the Petri net.
We have applied this method to a model of emergency call center.
Numerical experiments indicate that these theoretical results are representative of the real dynamics. 

In future work, we aim at comparing the behaviors of the fluid deterministic model and of the discrete stochastic one. We also plan to study uniqueness conditions of
the stationary regimes, and conditions under which convergence of the fluid dynamics to a stationary regime can be shown. We will refine our Petri net model of emergency call center to take care of the heterogeneous nature of level~2 (calls to police and firemen require different instruction times). To this end, it will be helpful to implement an analysis tool determining automatically the stationary regimes of a timed Petri net given in input. Finally, we plan to analyze the treatment times of the system, on top of the throughputs.

\bibliographystyle{alpha}
\newcommand{\etalchar}[1]{$^{#1}$}

\appendix
\section{Proof of Theorem~\ref{th:semantics}}\label{app:semantics}

\begin{lemma}\label{lemma:trace}
Suppose that all the holding times $\tau_p$ ($p \in \Pcal$) are positive, and consider the subpart of the execution trace formed by the transitions fired at the instant $t$, \ie:
\begin{equation}
\dots \trans[d] \sigma^{t^-} \trans[q_1] \; \trans[q_2] \dots\trans[q_n] \sigma^{t} \trans[d'] \label{eq:tracebis}
\end{equation}
(with $d > 0$ unless $t = 0$, and $d' > 0$). Then the following two properties hold:
\begin{enumerate}[(i)]
\item for all $p \in \Ppriority$, no transition $\trans[p\outm]$ can occur before a transition $\trans[p\outp]$ in~\eqref{eq:tracebis};
\item any pair of consecutive of consecutive transitions $\trans[q_i] \trans[q_{i+1}]$ can be switched in~\eqref{eq:tracebis} without changing the states occurring after, provided that $(q_i, q_{i+1})$ is not equal to $(p\outp, p\outm)$ for some $p \in \Ppriority$.
\end{enumerate}
\end{lemma}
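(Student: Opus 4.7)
The central observation is a monotonicity principle for the instantaneous subsequence~\eqref{eq:tracebis}: no time elapses during it, and since every $\tau_p$ is strictly positive, each firing only consumes tokens of age at least $\tau_p$ at their upstream places and produces fresh tokens of age $0$, which are unavailable for any further firing at the same instant. Consequently, the number of available tokens in each place is non-increasing along $\trans[q_1]\dots\trans[q_n]$; in particular, if a transition fails to be applicable in some intermediate state, it remains non-applicable in every later state of the subsequence.

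Statement~(i) then follows by contradiction. Assume $\trans[p\outm]$ occurs at position $j$ while $\trans[p\outp]$ occurs at some position $k > j$. The priority rule applied to the firing at position $j$ forces $p\outp$ to be non-applicable in $\sigma^{j-1}$; by the monotonicity principle, $p\outp$ is still non-applicable in $\sigma^{k-1}$, contradicting its firing at position $k$.

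For~(ii), swapping $\trans[q_i]\trans[q_{i+1}]$ does not alter the multiset of fired transitions, so the produced and consumed tokens are the same, and the state reached after the two firings, and hence all subsequent states, is unchanged. It remains to verify that the swapped pair is a valid step of the semantics. Applicability of $q_{i+1}$ in $\sigma^{i-1}$ is immediate from monotonicity; applicability of $q_i$ after firing $q_{i+1}$ follows from a token-count argument, since any shared upstream place must already contain in $\sigma^{i-1}$ enough available tokens to satisfy the total demand of $q_i$ and $q_{i+1}$, both of which fire successfully in the original order.

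The delicate point, and the reason for the exclusion of $(q_i,q_{i+1})=(p\outp,p\outm)$, is the priority constraint. The main obstacle is the subcase $q_{i+1} = p\outm$ for some $p\in\Ppriority$: we need $p\outp$ to be non-applicable in $\sigma^{i-1}$, whereas the original trace only guarantees this in $\sigma^i$. I would observe that the upstream places of $p\outp$ are $p$ itself and free-choice places $r$ whose unique downstream transition is $p\outp$, so the only transitions that can consume tokens relevant to $p\outp$'s applicability are $p\outp$ and $p\outm$. Hence the applicability of $p\outp$ differs between $\sigma^{i-1}$ and $\sigma^i$ only when $q_i \in \{p\outp, p\outm\}$: the first case is precisely the excluded one, and in the second the priority rule applied to the original firing $\trans[q_i] = \trans[p\outm]$ already gives $p\outp$ non-applicable in $\sigma^{i-1}$. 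The symmetric subcase in which $q_i = p\outm$ is handled dually, noting that $q_{i+1} = p\outp$ is impossible by the monotonicity argument of~(i).
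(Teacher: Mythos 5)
Your proof is correct and takes essentially the same route as the paper's: since all holding times are positive, tokens produced during the instant are unavailable for further firings at that instant, which gives the monotonicity of available tokens, from which (i) follows by contradiction with the priority rule and (ii) by an exchange argument leading to the same state. If anything, you are slightly more careful than the paper, which dismisses the priority issue in (ii) with the remark that there is no priority rule between $q_i$ and $q_{i+1}$, whereas you verify explicitly (using that the places upstream of $p\outp$ other than $p$ have $p\outp$ as their unique output) that firing a $q_i \notin \{p\outp, p\outm\}$ cannot change the applicability of $p\outp$, so the priority condition for an earlier firing of $p\outm$ is preserved.
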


\begin{proof}
\begin{asparaenum}[(i)]
\item Suppose that a transition $\trans[p\outm]$ occurs before $\trans[p\outp]$ in~\eqref{eq:tracebis}, \ie~we have a subsequence of the form $\sigma \trans[p\outm] \sigma' \dots \trans[p\outp]$. As all the holding times are positive, all the tokens consumed by $p\outp$ are already present in the state $\sigma$. In other words, the transition $\sigma \trans[p\outp] \dots$ is valid in the semantics. This contradicts the priority rule.

\item Consider a pair of consecutive transitions 
\[
\sigma^i \trans[q_i] \trans[q_{i+1}] \sigma^{i+2}
\]
such that $(q_i, q_{i+1}) \neq (p\outp, p\outm)$. As discussed in the previous case, since the holding times are positive, the transition $q_{i+1}$ does not consume tokens produced by the transition $q_i$. Besides, there is no priority rule between $q_i$ and $q_{i+1}$. Therefore, $q_{i+1}$ can be fired before $q_i$, and the sequence $\sigma^i \trans[q_{i+1}] \trans[q_i]$ leads to the same state $\sigma^{i+2}$. It follows that all the subsequent states remain identical. \qedhere
\end{asparaenum}
\end{proof}

Let us now prove Theorem~\ref{th:semantics}. It is useful to extract the part of the execution trace leading to the state $\sigma^t$ of the Petri net at the instant $t$. It has one of the following two forms:
\begin{equation}
\dots \trans[d] \sigma^{t^-} \trans[q_1] \; \trans[q_2] \dots\trans[q_n] \sigma^t \trans[d'] \label{eq:trace1}
\end{equation}
on
\begin{equation}
\dots \trans[d] \; \trans[q_1] \; \trans[q_2] \dots \trans[q_n] \; \trans[d''] \sigma^t \trans[d'] \label{eq:trace2}
\end{equation}
depending on whether some transitions $q \in \Qcal$ are fired at the instant $t$ or not. In both cases, $d', d''> 0$ and $d > 0$ unless $t = 0$, and the durations of the time-elapsing transitions occurring before the state $\sigma^t$ in the trace sum up to $t$. 

In this context, $z_q(t)$ counts the number of transitions occurring before $\sigma^t$ in the trace, while $x_p(t)$ is given by the sum of $M_p$ and the number of transitions $q \in p\inc$ occurring before $\sigma^t$. The constraint~\eqref{eq:pnpriority1} is therefore trivially satisfied by definition of $x_p(t)$ and the $z_q(t)$. 

Consider $p \in \Pconflict$. Since we use the earliest behavior semantics and $p$ is free choice, any token of the initial marking is consumed at the instant $0$, and any token brought by an upstream transition $q' \in p\inc$ at the instant $s \geq 0$ is consumed at the instant $s + \tau_p$. 
As a consequence, we can build a bijection which maps each initial token with the transition $\trans[q]$ which consumes it at the instant $0$, and any transition $\trans[q']$ occurring at the instant $s - \tau_p$ with the transition $\trans[q]$ which consumes at the instant $s$ the token brought by $q'$ to place $p$.
We deduce that 
\[
M_p +  \sum_{q' \in p\inc} z_{q'}(t - \tau_p) = \sum_{q \in p\out} z_q(t) \, .
\]
Using the constraint~\eqref{eq:pnpriority1}, this yields to $x_p(t - \tau_p) = \sum_{q \in p\out} z_q(t)$.

Now, let us take $q \in \Qsync$. 
Consider $p \in q\inc$. Recall that, by definition of $\Qsync$, $q$ is the only downstream transition of place $p$ if $q$. Therefore, every transition $\trans[q]$ arising at the instant $s$ consumes a token from place $p$. This token is either a initial token from $M_p$, or a token brought by a transition $q' \in p\inc$ fired before the instant $s - \tau_p$ (included). 
Therefore, we have $z_q(t) \leq x_p(t - \tau_p)$. In fact, $x_p(t - \tau_p) - z_q(t)$ is equal to the number of tokens with age greater than or equal to $\tau_p$ located in place $p$ in the state $\sigma^t$. At the instant $t + \eps$ with $0 < \eps < d'$, the age of these tokens will be strictly greater than $\tau_p$. 
Therefore, if $x_p(t - \tau_p) - z_q(t) > 0$ for all $p \in q\inc$, the transition $q$ can be fired at the instant $t + \eps$. 
But this is impossible in the earliest behavior semantics, since the places $p \in q\inc$ are not allowed to contain tokens with age strictly greater than $\tau_p$ while their downstream transition $q$ can be fired. We deduce that $x_p(t - \tau_p) = z_q(t)$ for some $p \in q\inc$. This proves~\eqref{eq:pnpriority3}.

Finally, consider $p \in \Ppriority$. Using similar arguments as the ones used in the previous case, we can show that $z_{p\outp}(t) \leq x_r(t-\tau_r)$ for all $r \in (p\outp)\inc$, $r \neq p$, and $z_{p\outm}(t) \leq x_r(t-\tau_r)$ for all $r \in (p\outm)\inc$, $r \neq p$. Besides, we have $z_{p\outp}(t) + 
z_{p\outm}(t) \leq x_p(t - \tau_p)$, since every firing of the transition $p\outp$ or $p\outm$ at the instant $s$ consumes a token of $M_p$ or a token brought by an upstream transition of $p$ before the instant $s - \tau_p$. In consequence, as the function $z_{p\outm}$ is non-decreasing, we obtain:
\[
z_{p\outp}(t) + z_{p\outm}(t^-) \leq x_p(t - \tau_p) \, .
\]
In order to prove that~\eqref{eq:pnpriority4} is satisfied, we distinguish two cases depending on the form of the trace:
\begin{asparaenum}[(i)]\item if the trace is of the form~\eqref{eq:trace1}, then, by Lemma~\ref{lemma:trace}, we can rewrite the subpart of the trace as follows:
\[
\dots \trans[d] \sigma^{t^-} \trans[q'_1] \; \trans[q'_2] \dots \trans[q'_k] \; \underbrace{\trans[p\outp] \dots \trans[p\outp]}_{k_+ \ \text{times}} \sigma \underbrace{\trans[p\outm] \dots \trans[p\outm]}_{k_- \ \text{times}} \sigma^t \trans[d']
\]
where $k_+, k_- \geq 0$, and where $p\outp$ and $p\outm$ do not appear in the $q'_i$. 
Then, the quantity $x_p(t - \tau_p) - z_{p\outp}(t) - z_{p\outm}(t^-)$ corresponds to the number of tokens with age greater than or equal to $\tau_p$ in the intermediary state $\sigma$. 
If it is positive, and if $x_r(t-\tau_r) - z_{p\outp}(t) > 0$ for all $r \in (p\outp)\inc$ such that $r \neq p$, then the transition $p\outp$ can be fired right after the state $\sigma$. 
This contradicts the priority rule if $k_- > 0$. 
If $k_- = 0$, we can fire $p\outp$ at the instant $t + \eps$ ($0 < \eps < d'$), which contradicts the definition of the earliest behavior semantics (all the upstream place of $p\outp$ contains a token older than allowed).

\item if the trace is of the form~\eqref{eq:trace2}, then $z_{p\outm}(t^-) = z_{p\outm}(t)$. In this case, the quantity $x_p(t - \tau_p) - z_{p\outp}(t) - z_{p\outm}(t^-)$ represents the number of tokens with age greater than or equal to $\tau_p$ at place $p$ in the state $\sigma^t$. If $x_p(t - \tau_p) - z_{p\outp}(t) - z_{p\outm}(t^-) > 0$ and $x_r(t-\tau_r) - z_{p\outp}(t) > 0$ for all $r \in (p\outp)\inc$ such that $r \neq p$, then the transition $p\outp$ can be fired at the instant $t + \eps$ with $0 < \eps < d'$. This is again a contradiction with the earliest behavior semantics. 
\end{asparaenum}

In both cases, we have $x_p(t - \tau_p) - z_{p\outp}(t) - z_{p\outm}(t^-) = 0$ or $x_r(t-\tau_r) - z_{p\outp}(t) = 0$ for some $r \in (p\outp)\inc$ such that $r \neq p$. We deduce that the constraint~\eqref{eq:pnpriority4} holds.

Now assume that $x_p(t - \tau_p) - z_{p\outp}(t) - z_{p\outm}(t) > 0$ and $x_r(t-\tau_r) - z_{p\outm}(t) > 0$ for all $r \in (p\outm)\inc$ such that $r \neq p$. 
These quantities correspond to the number of tokens in places $p$ and $r$ with age greater than or equal to $\tau_p$ and $\tau_r$ respectively, in the state $\sigma^t$. 
Thus, the transition $p\outm$ is activated at the instant $t + \eps$ for all $\eps > 0$ sufficiently small. 
Note that $x_p(t - \tau_p) - z_{p\outp}(t) - z_{p\outm}(t^-) > 0$ as $z_{p\outm}(t^-) < z_{p\outm}(t)$. 
Thus, there exists a place $r' \in (p\outp)\inc$ with $r \neq p$, such that $x_{r'}(t-\tau_{r'}) = z_{p\outp}(t)$. 
In other words, place $r'$ does not contain any token with age greater than or equal to $\tau_{r'}$.  
Given $\eps > 0$ sufficiently small, this is still true at the instant $t + \eps$, so that the transition $p\outp$ cannot be fired at the instant $t + \eps$. 
Therefore, we are allowed to fire the transition $p\outm$ at the instant $t + \eps$, which is a contradiction with the definition of the earliest behavior semantics. As a result, \eqref{eq:pnpriority5} is satisfied.\qed

\section{Proof of Theorem~\ref{th:stationary}}\label{app:stationary}

We denote the lexicographic order over $\R^2$ by $\preceq$, and we use the notation $x \prec y$ when $x \preceq y$ and $x \neq y$.

We first remark that given two functions $f$ and $g$ of affine germs $(\rho, u)$ and $(\rho', u')$, the function $t \mapsto f(t) \wedge g(t)$ belongs to the affine germ given by the minimum $(\rho, u)\wedge (\rho', u')$ taken in the lexicographic order. Besides, if $\tau \in \delta \N$, the function $t \mapsto f(t - \tau)$ belongs to the affine germ $(\rho, u - \tau \rho)$. Finally, for all $\lambda \in \R$, the affine germ of the map $t \mapsto \lambda f(t)$ is equal to $\lambda (\rho, u) = (\lambda \rho, \lambda u)$.

Now, suppose that $x_p$ and $z_q$ are stationary solutions of the $\delta$-discretization of the fluid dynamics, and let $(\rho_p, u_p)$ and $(\rho_q, u_q)$ be the respective germs, for $p \in \Pcal$ and $q \in \Qcal$. 
Since the functions $x_p$ and $z_q$ satisfy the constraints~\eqref{eq:pnpriority1}, \eqref{eq:pnpriority3}, \eqref{eq:pnpriority5}, \eqref{eq:pnpriority6fluid} for all $t \in \delta \N$, we deduce from the previous properties that the constraints~\eqref{eq:fixpoint_germ1}--\eqref{eq:fixpoint_germ4} are satisfied. Besides, given $p \in \Ppriority$, \eqref{eq:pnpriority4discrete} ensures that for all $t \in \delta \N$, we have:
\[
z_{p\outp}(t) = \bigl(x_p(t-\tau_p) - z_{p\outm}(t - \delta) \bigr) \wedge \bigwedge_{r \in (p\outp)\inc \, , \,  r \neq p} x_r(t - \tau_r) \, .
\]
Consequently, we obtain
\begin{equation}
(\rho_{p\outp}, u_{p\outp}) = (\rho_p - \rho_{p\outm}, u_p - \rho_p \tau_p - u_{p\outm} + \rho_{p\outm} \delta) \wedge \bigwedge_{r \in (p\outp)\inc \, , \,  r \neq p} (\rho_r, u_r - \rho_r \tau_r) \, .
\label{eq:fixpoint_germ_app}
\end{equation}
If $\rho_{p\outm} = 0$, this amounts to the constraint given in~\eqref{eq:fixpoint_germ5}. Now consider the case where $\rho_{p\outm} > 0$. Using~\eqref{eq:fixpoint_germ4}, we know that 
\[
(\rho_{p\outm}, u_{p\outm}) \preceq (\rho_p - \rho_{p\outp}, u_p - \rho_p \tau_p - u_{p\outp}) \, ,
\]
and thus
\[
(\rho_{p\outp}, u_{p\outp}) \preceq (\rho_p - \rho_{p\outm}, u_p - \rho_p \tau_p - u_{p\outm}) \, ,
\]
Since $\delta > 0$, it follows that 
\[
(\rho_{p\outp}, u_{p\outp}) \prec (\rho_p - \rho_{p\outm}, u_p - \rho_p \tau_p - u_{p\outm} + \rho_{p\outm} \delta) \, .
\]
We conclude that the constraint~\eqref{eq:fixpoint_germ_app} is equivalent to~\eqref{eq:fixpoint_germ5} when $\rho_{p\outm} > 0$.

Conversely, let $(\rho_p, u_p)$ and $(\rho_q, u_q)$ be solutions of System~\eqref{eq:fixpoint_germ}. We define $x_p$ and $z_q$ as the functions given by $x_p(t) = u_p + \rho_p k \delta$ and $z_q(t) = u_q + \rho_q k \delta$ for all $t \in [k\delta, (k+1) \delta)$ and $k \in \N$. 
The constraints~\eqref{eq:fixpoint_germ1}--\eqref{eq:fixpoint_germ4} ensure that~\eqref{eq:pnpriority1}, \eqref{eq:pnpriority3}, \eqref{eq:pnpriority5}, \eqref{eq:pnpriority6fluid} hold for all $t \in \delta \N$. 
Since all the holding times $\tau_p$ belong to $\delta \N$ and the functions $x_p$ and $z_q$ are constant on the intervals of the form $[k\delta, (k+1) \delta)$, we deduce that these constraints~\eqref{eq:pnpriority1}, \eqref{eq:pnpriority3}, \eqref{eq:pnpriority5}, \eqref{eq:pnpriority6fluid} actually hold for all $t$ in such intervals, and so for all $t \geq 0$. 
Moreover, as previously shown, the constraint given in~\eqref{eq:fixpoint_germ5} is equivalent to~\eqref{eq:fixpoint_germ_app}, since~\eqref{eq:fixpoint_germ4} is satisfied. 
This proves that the constraint~\eqref{eq:pnpriority4discrete} holds for all $t \in \delta \N$. 
It remains to show that the latter constraint is satisfied when $t \in (k\delta, (k+1) \delta)$. First observe that $z_{p\outp}(k \delta) \leq \bigwedge_{r \in (p\outp)\inc \, , \,  r \neq p} x_r(k\delta - \tau_r)$ ensures that $z_{p\outp}(t) \leq \bigwedge_{r \in (p\outp)\inc \, , \,  r \neq p} x_r(t - \tau_r)$. 
Besides, by~\eqref{eq:pnpriority5}, we know that $z_{p\outp}(t) \leq x_p(t-\tau_p) - z_{p\outm}(t)$.
We now distinguish two cases:
\begin{asparaenum}[(i)]
\item if we have $z_{p\outp}(k \delta) = \bigwedge_{r \in (p\outp)\inc \, , \,  r \neq p} x_r(k\delta - \tau_r)$, then straightforwardly, $z_{p\outp}(t) = \bigwedge_{r \in (p\outp)\inc \, , \,  r \neq p} x_r(t - \tau_r)$.
\item if $z_{p\outp}(k \delta) = x_p(k\delta-\tau_p) - z_{p\outm}((k - 1)\delta)$, we obtain:
\[ 
z_{p\outp}(k \delta) \geq x_p(k\delta-\tau_p) - z_{p\outm}(k\delta) \geq z_{p\outp}(k\delta) \, , 
\]
where the first inequality comes from the fact that $z_{p\outm}$ is non-decreasing, and the second inequality from~\eqref{eq:pnpriority5}. We deduce that $z_{p\outp}(k \delta) = x_p(k\delta-\tau_p) - z_{p\outm}(k\delta)$. Hence, we get $z_{p\outp}(t) = x_p(t - \tau_p) - z_{p\outm}(t)$.
\end{asparaenum}

As a consequence, in both cases, we have proved that $z_{p\outp}(t)$ is the minimum between $z_{p\outp}(t) - z_{p\outm}(t)$ and $\bigwedge_{r \in (p\outp)\inc \, , \,  r \neq p} x_r(t - \tau_r)$. This shows that~\eqref{eq:pnpriority4discrete} holds for all $t \geq 0$. \qed

\end{document}